\definecolor{myblue}{rgb}{.8, .8, 1}
  \newcommand*\mybluebox[1]{
    \colorbox{myblue}{\hspace{1em}#1\hspace{1em}}}
\crefname{equation}{}{}
\crefname{chapter}{Chapter}{Chapters}
\crefname{item}{item}{items}
\crefname{figure}{Figure}{Figures}
\crefname{theorem}{Theorem}{Theorems}
\crefname{lemma}{Lemma}{Lemmas}
\crefname{proposition}{Proposition}{Propositions}
\crefname{corollary}{Corollary}{Corollarys}
\crefname{definition}{Definition}{Definitions}
\crefname{fact}{Fact}{Facts}
\crefname{example}{Example}{Examples}
\crefname{algorithm}{Algorithm}{Algorithms}
\crefname{remark}{Remark}{Remarks}
\crefname{note}{Note}{Notes}
\crefname{notation}{Notation}{Notations}
\crefname{case}{Case}{Cases}
\crefname{exercise}{Exercise}{Exercises}
\crefname{question}{Question}{Questions}
\crefname{claim}{Claim}{Claims}
\crefname{enumi}{}{}
\numberwithin{equation}{section}
\theoremstyle{plain}
\newtheorem{theorem}{Theorem}[section]
\newtheorem{corollary}[theorem]{Corollary}
\newtheorem{fact}[theorem]{Fact}
\newtheorem{lemma}[theorem]{Lemma}
\theoremstyle{definition}
\newtheorem{example}[theorem]{Example}
\newtheorem{remark}[theorem]{Remark}
\newcommand{\inte}{\ensuremath{\operatorname{int}}}
\newcommand{\conv}{\ensuremath{\operatorname{conv}}}
\newcommand{\Fix}{\ensuremath{\operatorname{Fix}}}
\newcommand{\Id}{\ensuremath{\operatorname{Id}}}
\newcommand{\floor}[1]{\left\lfloor #1 \right\rfloor}
\providecommand{\innp}[1]{\langle#1\rangle}
\newcommand{\scal}[2]{\left\langle{#1},{#2}  \right\rangle}
\newcommand{\NN}{\ensuremath{{{\mathbb N}}}}
\newcommand{\nnn}{\ensuremath{{n\in{\mathbb N}}}}
\newcommand{\kkk}{\ensuremath{{k\in{\mathbb N}}}}
\newcommand{\sss}{\ensuremath{{{\mathbb S}}}}
\newcommand{\RR}{\ensuremath{{{\mathbb R}}}}
\newcommand{\QQ}{\ensuremath{{{\mathbb{Q}}}}}
\newcommand{\menge}[2]{\big\{{#1}~\big |~{#2}\big\}}
\newcommand{\tmenge}[2]{\{{#1}~|~{#2}\}}
\begin{document}

\title{ \sffamily  Directional asymptotics of Fej\'er monotone sequences}

\author{
         Heinz H.\ Bauschke\thanks{
                 Mathematics, University of British Columbia, Kelowna, B.C.\ V1V~1V7, Canada.
                 E-mail: \href{mailto:heinz.bauschke@ubc.ca}{\texttt{heinz.bauschke@ubc.ca}}.},~
         Manish\ Krishan Lal\thanks{
                 Mathematics, University of British Columbia, Kelowna, B.C.\ V1V~1V7, Canada.
                 E-mail: \href{mailto:manish.krishanlal@ubc.ca}{\texttt{manish.krishanlal@ubc.ca}}.},~
         and Xianfu\ Wang\thanks{
                 Mathematics, University of British Columbia, Kelowna, B.C.\ V1V~1V7, Canada.
                 E-mail: \href{mailto:shawn.wang@ubc.ca}{\texttt{shawn.wang@ubc.ca}}.}
                 }

\date{June 29, 2021}

\maketitle

\begin{abstract}
The notion of Fej\'er monotonicity is instrumental in unifying the convergence
proofs of many iterative
 methods, such as the Krasnoselskii--Mann iteration,
the proximal point method, the Douglas-Rachford splitting algorithm,
 and many others.
In this paper, we present directionally asymptotical results of strongly
convergent subsequences
 of Fej\'er monotone sequences. We also provide examples to
show that the sets of directionally asymptotic cluster
 points can be large and
that weak convergence is needed in infinite-dimensional spaces.
\end{abstract}

{\small
\noindent
{\bfseries 2020 Mathematics Subject Classification:}
{
	Primary 47H09, 47J26, 90C25;
	Secondary 47H05, 65K10. 
}

\noindent{\bfseries Keywords:}
Fej\'er monotone sequence, 
firmly nonexpansive mapping, 
normal cone. 
}

\section{Introduction}

One of the most important tools in studying convergence of iterative methods 
in optimization and convex analysis
is Fej\'er monotonicity;
see, e.g., \cite[Chapters 5, 26, 28]{BC2017}, \cite{cegielski},
\cite{dao15},
\cite{PLC00}, \cite{PLC01}, \cite{PLC04}.
Recently,  among many
important advances,
Rockafellar showed in \cite{Rocky2021} that in a finite-dimensional Hilbert space
the sequences generated by the proximal point
algorithm enjoy directionally asymptotic properties.
In this paper, we study directional asymptotics
of Fej\'er monotone sequences in Hilbert spaces. Consequently, many iteration methods in \cite{BC2017} and
\cite{cegielski}, whose convergence analysis
relies on the Fej\'er monotonicity, have these directionally asymptotic behaviour.

%
%


The paper is organized as follows.
In \cref{sec:aux}, we provide some preliminary results on Fej\'er monotone sequences
useful in subsequent proofs.
Our main results on directional asymptotics of Fej\'er monotone sequences
are presented in \cref{sec:main}. In \cref{s:large:cluster}, we show that the sets of
directional asymptotics of Fej\'er monotone sequences can be large.
We conclude the paper with an infinite-dimensional example illustrating 
weak without strong convergence
in \cref{sec:shift}.

The notation that we employ is for the most part standard and follows
\cite{BC2017}; however, a partial
list is provided for the reader’s convenience.
Throughout this paper, we assume that
\begin{empheq}[box = \mybluebox]{equation*}
\text{$X$ is a real Hilbert space},
\end{empheq}
with inner product $\innp{\cdot,\cdot}$ and induced
 norm $\|\cdot\|$. We use  $\NN:=\{0, 1, 2, \ldots\}$ for set of natural numbers. 

Let $(x_{n})_\nnn$ be a sequence in $X$.
We denote the set of \emph{(weak) cluster points of $(x_n)_\nnn$} by 
\begin{empheq}[box = \mybluebox]{equation*}
  \mathcal{C}\big((x_n)_\nnn\big) := 
  \menge{x\in X}{\text{$x$ is the weak limit of some subsequence of $(x_n)_\nnn$}}. 
\end{empheq}
Of course, if $X$ is finite-dimensional, then 
$\mathcal{C}\big((x_n)_\nnn\big)$ is the same as the set of strong cluster points 
of $(x_n)_\nnn$. 
We write $x_{n}\rightarrow x$ if $(x_{n})_\nnn$ converges strongly to $x$, and
$x_{n}\rightharpoonup x$ if $(x_{n})_\nnn$ converges weakly to $x$.
Let $C$ be a subset of $X$ and let $(x_{n})_\nnn$ be a sequence in $X$.
Then $(x_{n})_\nnn$ is \emph{Fej\'er monotone with respect to $C$} if
$$(\forall c\in C)(\forall \nnn)\ \|x_{n+1}-c\|\leq \|x_{n}-c\|,$$
and we also call $C$ a \emph{Fej\'er monotone set of $(x_{n})_\nnn$}.
The corresponding \emph{support function} of $C$ is 
defined by $\sigma_C(x):=\sup\scal{C}{x}$ while
the corresponding \emph{distance function} is 
$d_C(x):=\inf\|C-x\|$, for every $x\in X$. The 
\emph{polar cone of $C$} is 
$C^{\ominus}:=\menge{u\in X}{\sigma_{C}(u)\leq 0}$;
note that if $z\in C$, then
$N_C(z)=(C-z)^\ominus$ is the \emph{normal cone} of $C$ at $x$.
For a set-valued monotone operator $A:X\rightrightarrows X$, 
the corresponding \emph{resolvent} is 
$J_{A}:=(\Id +A)^{-1}$.
Finally, the \emph{unit sphere} is abbreviated by 
\begin{empheq}[box = \mybluebox]{equation*}
  \sss :=\menge{x\in X}{\|x\|=1}. 
\end{empheq}

\section{Auxiliary results}
\label{sec:aux}

We start with some preparatory results on Fej\'er monotone sequences.

\begin{lemma}\label{l:fejer} Let $(x_{n})_\nnn$ be a sequence in $X$. Then the following hold:
\begin{enumerate}
\item\label{i:largest}
 The largest Fej\'er monotone set of $(x_{n})_\nnn$ is the (possibly empty) 
 closed convex set
$$\bigcap_\nnn\menge{z\in X}{2\scal{x_{n}-x_{n+1}}{z}\leq \|x_n\|^2-\|x_{n+1}\|^2},$$
and is closed convex.
\item\label{i:convexhull}
 If $C$ is a Fej\'er monotone set of $(x_{n})_\nnn$, then
$\overline{\conv}\, C$, the closed convex hull of $C$, is a Fej\'er monotone set of $(x_n)_{\nnn}$.
\item\label{i:union} If $C_{1}, C_{2}$ are Fej\'er monotone sets of $(x_{n})_{\nnn}$, then
$C_{1}\cup C_{2}$ is a Fej\'er monotone set of $(x_{n})_{\nnn}$.
\item\label{i:subfejer}
If $C$ is a Fej\'er monotone set of $(x_{n})_\nnn$, then $C$ is a Fej\'er monotone set
of every subsequence of $(x_{n})_\nnn$.
\end{enumerate}
\end{lemma}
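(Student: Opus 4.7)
The plan is to start with \cref{i:largest}, since its halfspace characterization streamlines the remaining parts. Expanding the Fej\'er inequality $\|x_{n+1}-z\|^2\leq \|x_n-z\|^2$ via $\|u-z\|^2=\|u\|^2-2\scal{u}{z}+\|z\|^2$ causes the $\|z\|^2$ terms to cancel, leaving the equivalent linear inequality $2\scal{x_n-x_{n+1}}{z}\leq \|x_n\|^2-\|x_{n+1}\|^2$. Hence a point $z\in X$ satisfies $\|x_{n+1}-z\|\leq \|x_n-z\|$ for every $\nnn$ if and only if it belongs to the displayed intersection. That intersection therefore collects exactly the elements of the union of all Fej\'er monotone sets of $(x_n)_\nnn$ and is itself the largest such set. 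Each factor in the intersection is either all of $X$ (when $x_n=x_{n+1}$) or a closed half-space, so the intersection is closed and convex.

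Once \cref{i:largest} is in hand, \cref{i:convexhull} is immediate: the largest Fej\'er monotone set is closed and convex and contains $C$, so it contains $\overline{\conv}\, C$, which is therefore itself Fej\'er monotone. Part \cref{i:union} follows directly from the definition, since any $c\in C_1\cup C_2$ lies in at least one of the two Fej\'er monotone sets $C_1,C_2$ and so inherits the inequality $\|x_{n+1}-c\|\leq \|x_n-c\|$. For \cref{i:subfejer}, given any subsequence $(x_{n_k})_{\kkk}$ of $(x_n)_\nnn$ and any $c\in C$, one telescopes
\[
\|x_{n_{k+1}}-c\|\leq \|x_{n_{k+1}-1}-c\|\leq \cdots \leq \|x_{n_k}-c\|,
\]
applying the original Fej\'er inequality $n_{k+1}-n_k$ times.

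There is no substantial obstacle here: the whole lemma reduces to the halfspace reformulation of the Fej\'er inequality in \cref{i:largest}, together with elementary set-theoretic observations. The only mild subtlety worth flagging is noting that a halfspace characterization really does require \emph{squaring} the Fej\'er inequality (which is legitimate, as both sides are nonnegative) so that the $\|z\|^2$ terms can cancel and produce a linear condition in $z$.
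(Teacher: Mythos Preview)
Your proof is correct and follows essentially the same approach as the paper: expand and square the Fej\'er inequality to obtain the halfspace characterization for \cref{i:largest}, then derive \cref{i:convexhull} and \cref{i:union} from it (the paper deduces both from \cref{i:largest}, whereas you argue \cref{i:union} directly from the definition, but this is an immaterial difference), and note that \cref{i:subfejer} is immediate from the definition.
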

\begin{proof}
\cref{i:largest}: Let $z\in X$ and $\nnn$. 
Then 
$\|x_{n+1}-z\|\leq \|x_{n}-z\|$
$\Leftrightarrow$ 
$\|x_{n+1}-z\|^2\leq \|x_{n}-z\|^2$
$\Leftrightarrow$ 
$\|x_{n+1}\|^2 + \|z\|^2-2\scal{x_{n+1}}{z} 
\leq \|x_{n}\|^2 + \|z\|^2-2\scal{x_{n}}{z}$
$\Leftrightarrow$ 
$2\scal{x_{n}-x_{n+1}}{z}\leq \|x_n\|^2-\|x_{n+1}\|^2$.
\cref{i:convexhull}\&\cref{i:union}: These follow from \cref{i:largest}.
\cref{i:subfejer}: Obvious from the definition of Fej\'er monotonicity. 
\end{proof}

\begin{lemma}\label{l:fejerset}
Let $C$ be a nonempty closed convex subset of $X$, let $\widebar{z}\in X$, and
let $(x_{n})_\nnn$ be a sequence in $X$.
Suppose that $(x_{n})_\nnn$ is Fej\'er monotone with respect to $C$. 
Then the following hold for all $n,m$ in $\NN$ such that $m\geq n+1$:
\begin{subequations}
\label{e:210625a}
\begin{align}
\label{i:f:consecutive}
(\forall z\in C)
\quad 
\scal{x_{n}-x_{n+1}}{z-\widebar{z}} 
&\leq \tfrac{1}{2}\big(\|x_n-\widebar{z}\|^2 - \|x_{n+1}-\widebar{z}\|^2\big) \\
\label{i:f:consecutive+}
&= \scal{x_{n+1}-\widebar{z}}{x_{n}-x_{n+1}}+\tfrac{1}{2}\|x_{n}-x_{n+1}\|^2\\
\label{i:f:consecutive++}
&\leq \|x_{n+1}-\widebar{z}\|\|x_{n}-x_{n+1}\|+\tfrac{1}{2}\|x_{n}-x_{n+1}\|^2
\end{align}
\end{subequations}
and 
\begin{align}\label{e:limit} 
(\forall z\in C) \quad \scal{x_{n}-x_{m}}{z-\widebar{z}} &
\leq \tfrac{1}{2}(\|x_{n}-\widebar{z}\|^2-\|x_{m}-\widebar{z}\|^2).
\end{align}
\end{lemma}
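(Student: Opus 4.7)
The plan is essentially direct computation: every inequality follows from the definition of Fej\'er monotonicity together with elementary expansion of norms, so the proof is more a matter of organizing the algebra than finding a clever idea.

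First, I would prove \eqref{i:f:consecutive}. Starting from $\|x_{n+1}-z\|\leq \|x_n-z\|$ for $z\in C$, I square both sides and insert $\widebar{z}$ by writing $x_n - z = (x_n-\widebar{z})-(z-\widebar{z})$ and $x_{n+1}-z = (x_{n+1}-\widebar{z})-(z-\widebar{z})$. Expanding each square produces a common term $\|z-\widebar{z}\|^2$ which cancels, leaving
\begin{equation*}
\|x_{n+1}-\widebar{z}\|^2 - 2\scal{x_{n+1}-\widebar{z}}{z-\widebar{z}}
\leq \|x_{n}-\widebar{z}\|^2 - 2\scal{x_{n}-\widebar{z}}{z-\widebar{z}},
\end{equation*}
which rearranges immediately to $2\scal{x_n-x_{n+1}}{z-\widebar{z}} \leq \|x_n-\widebar{z}\|^2-\|x_{n+1}-\widebar{z}\|^2$.

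Next, for the identity \eqref{i:f:consecutive+}, I set $a:=x_n-\widebar{z}$ and $b:=x_{n+1}-\widebar{z}$, so $a-b = x_n-x_{n+1}$, and use the elementary identity $\|a\|^2-\|b\|^2 = \|b+(a-b)\|^2 - \|b\|^2 = 2\scal{b}{a-b}+\|a-b\|^2$. Dividing by $2$ gives exactly the claimed right-hand side $\scal{x_{n+1}-\widebar{z}}{x_n-x_{n+1}}+\tfrac{1}{2}\|x_n-x_{n+1}\|^2$. The inequality \eqref{i:f:consecutive++} is then immediate from Cauchy--Schwarz applied to the inner product in \eqref{i:f:consecutive+}.

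Finally, for \eqref{e:limit}, I telescope: applying \eqref{i:f:consecutive} with $n$ replaced by $k$ for each $k\in\{n,n+1,\dots,m-1\}$ and summing, the left-hand side collapses to $\scal{x_n-x_m}{z-\widebar{z}}$ and the right-hand side collapses to $\tfrac{1}{2}(\|x_n-\widebar{z}\|^2-\|x_m-\widebar{z}\|^2)$, yielding the claim. There is no genuine obstacle here; the only minor care needed is bookkeeping the telescoping index range and making sure the quantifier \enquote{for all $z\in C$} is preserved throughout, since each step is carried out pointwise in $z$.
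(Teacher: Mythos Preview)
Your proposal is correct and follows essentially the same approach as the paper: expand the squared Fej\'er inequality, insert $\widebar{z}$, rearrange to obtain \eqref{i:f:consecutive}--\eqref{i:f:consecutive+}, apply Cauchy--Schwarz for \eqref{i:f:consecutive++}, and telescope for \eqref{e:limit}. The only cosmetic difference is that the paper expands $\|x_k-z\|^2-\|x_{k+1}-z\|^2$ around $x_{k+1}$ first (so that \eqref{i:f:consecutive+} appears before \eqref{i:f:consecutive}), whereas you expand around $\widebar{z}$ first; the algebra is identical.
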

\begin{proof}
Let $z\in C$ and let $k\in\{n,n+1,\ldots,m\}$. 
Because $(x_n)_\nnn$ is Fej\'er monotone with respect to $C$, we have 
\begin{align*}
0 &\leq \|x_k-z\|^2 - \|x_{k+1}-z\|^2\\
&= \|x_k-x_{k+1}\|^2 + 2\scal{x_k-x_{k+1}}{x_{k+1}-z}\\
&= \|x_k-x_{k+1}\|^2 + 2\scal{x_k-x_{k+1}}{(x_{k+1}-\widebar{z})-(z-\widebar{z})}.
\end{align*}
Therefore,
\begin{subequations}
\label{e:tele}
\begin{align}
\scal{x_k-x_{k+1}}{z-\widebar{z}}
&\leq
\tfrac{1}{2}\|x_k-x_{k+1}\|^2 + \scal{x_k-x_{k+1}}{x_{k+1}-\widebar{z}}\\
&= 
\tfrac{1}{2}\big(\|x_k-\widebar{z}\|^2 - \|x_{k+1}-\widebar{z}\|^2\big) 
\end{align}
\end{subequations}
which yields \cref{i:f:consecutive} and \cref{i:f:consecutive+}.
Next, \cref{i:f:consecutive++} is just Cauchy-Schwarz. 
Finally, \cref{e:limit} follows by summing 
\cref{e:tele} from $k=n$ to $k=m$ and telescoping. 
\end{proof}

We now localize the set of weak cluster points of a sequence.

\begin{lemma}\label{l:polarset:f}
 Let $C$ be a nonempty subset of $X$ and let $(x_{n})_\nnn$ be a sequence in $X$. 
 Suppose that
$$\varlimsup_{n\to\infty}\sigma_{C}(x_{n})\leq 0.$$
Then 
$$\mathcal{C}\big((x_n)_\nnn\big) \subseteq C^\ominus.$$
\end{lemma}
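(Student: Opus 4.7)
The plan is to unpack the definitions: fix $x\in\mathcal{C}\big((x_n)_\nnn\big)$ and a subsequence $(x_{n_k})_\kkk$ with $x_{n_k}\weakly x$, and then show $\sigma_C(x)\leq 0$, which by the definition of the polar cone is exactly $x\in C^\ominus$.

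First I would fix an arbitrary $c\in C$. By weak convergence, $\scal{c}{x_{n_k}}\to\scal{c}{x}$ as $k\to\infty$. Since $c\in C$, the defining inequality of the support function gives $\scal{c}{x_{n_k}}\leq \sigma_C(x_{n_k})$ for every $k$, so passing to the limit superior yields
\begin{equation*}
\scal{c}{x} \;=\; \lim_{k\to\infty}\scal{c}{x_{n_k}}
\;\leq\; \varlimsup_{k\to\infty}\sigma_C(x_{n_k})
\;\leq\; \varlimsup_{n\to\infty}\sigma_C(x_n)\;\leq\;0,
\end{equation*}
where the penultimate inequality uses that the limit superior of a subsequence is bounded above by the limit superior of the original sequence, and the last inequality is the hypothesis.

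Since $c\in C$ was arbitrary, taking the supremum over $c\in C$ gives $\sigma_C(x)=\sup_{c\in C}\scal{c}{x}\leq 0$, so $x\in C^\ominus$ as required.

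There is essentially no obstacle here: the proof is a one-line chain using (a) weak continuity of the linear functional $\scal{c}{\cdot}$ for fixed $c$, (b) the elementary bound $\scal{c}{x_n}\leq \sigma_C(x_n)$, and (c) the hypothesis on $\varlimsup \sigma_C(x_n)$. The only point worth noting is that $C$ need not be closed, convex, or bounded, and $\sigma_C$ may take the value $+\infty$; neither of these causes any trouble, because the argument works pointwise in $c\in C$ before taking the supremum.
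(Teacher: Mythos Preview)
Your proof is correct and essentially the same as the paper's: both rely on the weak lower semicontinuity of $\sigma_C$ along the subsequence, the inequality $\varlimsup_k \sigma_C(x_{n_k})\leq \varlimsup_n \sigma_C(x_n)$, and the hypothesis. The only cosmetic differences are that the paper argues by contradiction and invokes the weak lower semicontinuity of $\sigma_C$ as a known fact, whereas you argue directly and unpack that lower semicontinuity by working pointwise in $c\in C$ before taking the supremum.
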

\begin{proof}
Suppose that 
$x\in \mathcal{C}\big((x_{n})_\nnn\big)$ and to the contrary that 
$x\notin C^\ominus$. 
Then $\sigma_C(x)>0$ and there exists 
a weakly convergent subsequence $(x_{k_n})_\nnn$ of $(x_n)_\nnn$ such that 
$x_{k_n}\rightharpoonup x$. 
The weak lower semicontinuity of $\sigma_C$ now implies
$$0<\sigma_{C}(x)\leq \varliminf_{n\to\infty}\sigma_{C}(x_{k_{n}})
\leq\varlimsup_{n\rightarrow\infty}\sigma_{C}(x_{k_{n}})\leq
\varlimsup_{n\rightarrow\infty}\sigma_{C}(x_{n})\leq 0,$$
which is absurd!
\end{proof}

\begin{lemma}\label{l:distance} 
Suppose that $X$ is finite-dimensional,
let $C$ be a nonempty closed subset of $X$, and let $(x_{n})_\nnn$ be a bounded sequence
in $X$. Then
\begin{equation*}
d_C(x_n)\to 0 
\;\;\Leftrightarrow\;\;
\mathcal{C}\big((x_n)_\nnn \big) \subseteq C.
\end{equation*}
\end{lemma}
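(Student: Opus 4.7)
The plan is to establish both implications by exploiting the $1$-Lipschitz continuity of the distance function $d_C$ together with the fact that, in finite dimensions, weak cluster points coincide with strong cluster points and that bounded sequences admit convergent subsequences.

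For the forward implication ($\Rightarrow$), I would take an arbitrary $x\in\mathcal{C}\big((x_n)_\nnn\big)$. Since $X$ is finite-dimensional, weak convergence coincides with strong convergence, so there is a subsequence $(x_{k_n})_\nnn$ with $x_{k_n}\to x$. Then $d_C(x_{k_n})\to d_C(x)$ by continuity of $d_C$, while by hypothesis $d_C(x_{k_n})\to 0$. Hence $d_C(x)=0$, and since $C$ is closed this yields $x\in C$.

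For the backward implication ($\Leftarrow$), I would argue by contradiction. Suppose $d_C(x_n)\not\to 0$. Then there exist $\varepsilon>0$ and a subsequence $(x_{k_n})_\nnn$ with $d_C(x_{k_n})\geq\varepsilon$ for every $\nnn$. Because $(x_n)_\nnn$ is bounded and $X$ is finite-dimensional, the Bolzano--Weierstrass theorem produces a further subsequence $(x_{k_{n_j}})_{j\in\NN}$ converging strongly to some $\widebar{x}\in X$. Then $\widebar{x}\in\mathcal{C}\big((x_n)_\nnn\big)\subseteq C$, so $d_C(\widebar{x})=0$; but continuity of $d_C$ gives $d_C(x_{k_{n_j}})\to d_C(\widebar{x})=0$, contradicting the bound $d_C(x_{k_{n_j}})\geq\varepsilon$.

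I do not anticipate any real obstacle here: both directions reduce to the standard facts that $d_C$ is $1$-Lipschitz, that $d_C$ vanishes exactly on $\overline{C}=C$, and that in finite dimensions bounded sequences have convergent subsequences whose limits are (strong, equivalently weak) cluster points. The finite-dimensionality hypothesis enters crucially in the converse direction through Bolzano--Weierstrass; in infinite dimensions one would only obtain a weakly convergent subsequence, and continuity of $d_C$ fails for weak convergence, so the argument would break down.
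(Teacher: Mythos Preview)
Your proof is correct and follows essentially the same approach as the paper: both directions use continuity of $d_C$, closedness of $C$, and Bolzano--Weierstrass in the converse, with the only cosmetic difference that the paper phrases the contradiction via $\varlimsup_{n\to\infty} d_C(x_n)>0$ rather than an explicit $\varepsilon$-bound.
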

\begin{proof}
``$\Rightarrow$'': 
Let $x$ be a cluster point in $\mathcal{C}\big((x_{n})_\nnn\big)$, say
$x_{k_{n}}\rightarrow x$. 
The continuity of $d_C$ and the assumption yield
$$d_{C}(x)=\lim_{n\to\infty}d_{C}(x_{k_{n}})=
\lim_{n\to\infty}d_{C}(x_{n})=0.$$
Hence $x\in C$ because $C$ is closed. 

``$\Leftarrow$'':
Suppose to the contrary that
$\varlimsup_{n\to\infty} d_{C}(x_{n})>0$. 
Then there exists a subsequence $(x_{k_n})_\nnn$ of $(x_n)_\nnn$ such that 
\begin{equation}
\label{e:absurd}
  \lim_{n\to\infty}d_{C}(x_{k_{n}})=\varlimsup_{n\to\infty} d_{C}(x_{n})>0.
\end{equation}
Recall that $(x_n)_\nnn$ is bounded and $X$ is finite-dimensional.
Using Bolzano-Weierstrass and after passing to another subsequence 
and relabeling, we may and do assume that 
$x_{k_n}\to x$. 
By assumption, $x\in C$. 
But then $d_C(x_{k_n}) \to d_C(x)=0$ which contradicts \cref{e:absurd}.
\end{proof}

We end this section with results on linear isometries. 

\begin{lemma}\label{l:skew}
 Let $A:X\to X$ be a linear isometry. 
Then the following hold: 
\begin{enumerate}
\item\label{i:zset} $A$ is injective. 
\item\label{i:resol2} 
If $T := \tfrac{1}{2}\Id + \tfrac{1}{2}A$, then 
$(\forall x\in X)$ $Tx \perp (x-Tx)$. 
 \item\label{i:resol1} 
If $A^*=A^{-1}=-A$, then 
$J_{A}=\tfrac{1}{2}\Id-\tfrac{1}{2}A$ and 
$(\forall x\in X)$ $J_Ax \perp (x-J_Ax)$. 
\end{enumerate}
\end{lemma}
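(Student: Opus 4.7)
The plan is to handle the three parts in order, each by a direct short computation; nothing deeper than the isometry identity $\|Ax\|=\|x\|$ and the symmetry of the real inner product is needed.

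For \cref{i:zset}, I will use the definition of isometry directly. If $Ax=0$, then $\|x\|=\|Ax\|=0$, so $x=0$. Hence $\ker A=\{0\}$, and $A$ is injective.

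For \cref{i:resol2}, I will expand. Writing $Tx=\tfrac12(x+Ax)$ and $x-Tx=\tfrac12(x-Ax)$ yields
$$\scal{Tx}{x-Tx} = \tfrac14\scal{x+Ax}{x-Ax}= \tfrac14\bigl(\|x\|^2-\scal{x}{Ax}+\scal{Ax}{x}-\|Ax\|^2\bigr).$$
In a real Hilbert space $\scal{x}{Ax}=\scal{Ax}{x}$, so the cross terms cancel, and since $A$ is an isometry we have $\|Ax\|=\|x\|$, so the whole expression vanishes.

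For \cref{i:resol1}, the first step is to identify $J_A$. From $A^{-1}=-A$ I get $A^2=-\Id$, and then
$$(\Id+A)(\tfrac12\Id-\tfrac12A)=\tfrac12\Id-\tfrac12A^2=\Id, \qquad (\tfrac12\Id-\tfrac12A)(\Id+A)=\tfrac12\Id-\tfrac12A^2=\Id,$$
so $\Id+A$ is a bijection of $X$ onto itself with inverse $\tfrac12\Id-\tfrac12A$; thus $J_A=\tfrac12\Id-\tfrac12A$. For the orthogonality, I will simply observe that $-A$ is also a linear isometry and apply \cref{i:resol2} to $-A$: the corresponding $T$ equals $\tfrac12\Id-\tfrac12A=J_A$, so $J_Ax\perp(x-J_Ax)$ follows immediately. (Alternatively, a direct expansion analogous to \cref{i:resol2} works.)

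I do not expect any genuine obstacle here: each item collapses to a one-line inner-product computation once the algebraic identity $A^2=-\Id$ is extracted from $A^{-1}=-A$ in \cref{i:resol1}. The only care needed is to use the symmetry of the real inner product in \cref{i:resol2} so the cross terms cancel, and, in \cref{i:resol1}, to verify both one-sided products $(\Id+A)(\tfrac12\Id-\tfrac12A)$ and $(\tfrac12\Id-\tfrac12A)(\Id+A)$ equal $\Id$, which ensures the resolvent is defined on all of $X$ and agrees with the claimed expression.
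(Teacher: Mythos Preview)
Your proof is correct. Parts \cref{i:zset} and \cref{i:resol2} are essentially identical to the paper's argument (the paper writes $4\scal{Tx}{x-Tx}=\scal{x+Ax}{x-Ax}=\|x\|^2-\|Ax\|^2=0$ without belabouring the cross terms, but the content is the same).

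The only genuine difference is in \cref{i:resol1}. The paper first notes that $\pm A$ is monotone and $A^2=-\Id$, then invokes an external result (\cite[Proposition~2.10]{84}) to conclude $J_A=\tfrac12\Id-\tfrac12A$; it then writes $J_A=\Id-T$ with $T=\tfrac12\Id+\tfrac12A$ and applies \cref{i:resol2} to that $T$ to get the orthogonality. You instead verify the resolvent formula directly by multiplying out $(\Id+A)(\tfrac12\Id-\tfrac12A)$ and its reverse, and obtain the orthogonality by applying \cref{i:resol2} to $-A$ (which is also an isometry). Your route is more self-contained and avoids the external citation, at the cost of a line of algebra; the paper's route is shorter on the page but leans on a reference. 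Both are perfectly fine here since $\Id+A$ is a genuine bounded linear bijection, so there is no subtlety about domains or set-valuedness of the resolvent.
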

\begin{proof}
Let $x\in X$. 
\cref{i:zset}: If $Ax=0$, then $0=\|Ax\|=\|x\|$ and so $x=0$. 

\cref{i:resol2}: 
Note that $\Id-T = \tfrac{1}{2}\Id-\tfrac{1}{2}$. Hence 
\begin{equation*}
4\scal{Tx}{x-Tx} = 
\scal{x+Ax}{x-Ax} = \|x\|^2-\|Ax\|^2=0.
\end{equation*}
\cref{i:resol1}: 
Clearly, $\pm A$ is monotone and $A^2 = -\Id$.
Hence, \cite[Proposition~2.10]{84} yields $J_A=\tfrac{1}{2}\Id-\tfrac{1}{2}A=
\Id-T$, where $T=\tfrac{1}{2}\Id+\tfrac{1}{2}A$.
Now apply \cref{i:resol2}. 
\end{proof}

\begin{corollary}\label{t:perp}
Let $A:X\to X$ be a linear operator such that $A^* = A^{-1} = -A$, 
let $x_0\in X\smallsetminus\{0\}$, and set 
$$(\forall \nnn)\quad x_{n+1}:=J_{A}x_{n} = \tfrac{1}{2}x_n-\tfrac{1}{2}Ax_n.$$
Then $x_n\to 0$ and $(\forall \nnn)$
$x_{n+1}\neq x_n$, and 
\begin{equation}\label{e:orthogonal}
(\forall \nnn)\ \scal{\frac{x_{n+1}}{\|x_{n+1}\|}}{\frac{x_n-x_{n+1}}{\|x_n-x_{n+1}\|}}=0.
\end{equation}
\end{corollary}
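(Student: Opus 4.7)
The plan is to use \cref{l:skew} as the main engine: part \cref{i:resol1} gives $J_A = \tfrac{1}{2}\Id - \tfrac{1}{2}A$ and the orthogonality $J_Ax \perp (x-J_Ax)$ essentially for free, and part \cref{i:zset} gives injectivity of $A$. What remains is to track the norms along the iteration, show they decrease geometrically to $0$, and verify that no degeneracy (zero vectors or coincident consecutive iterates) spoils the denominator in \cref{e:orthogonal}.

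First, I would compute $\|x_{n+1}\|^2$ directly. Since $x_{n+1} = \tfrac{1}{2}x_n - \tfrac{1}{2}Ax_n$, expanding the inner product gives
\begin{equation*}
\|x_{n+1}\|^2 = \tfrac{1}{4}\|x_n\|^2 - \tfrac{1}{2}\scal{x_n}{Ax_n} + \tfrac{1}{4}\|Ax_n\|^2.
\end{equation*}
Skew-adjointness $A^* = -A$ forces $\scal{x_n}{Ax_n} = \scal{A^*x_n}{x_n} = -\scal{x_n}{Ax_n}$, hence $\scal{x_n}{Ax_n}=0$; isometry yields $\|Ax_n\|=\|x_n\|$. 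Therefore $\|x_{n+1}\|^2 = \tfrac{1}{2}\|x_n\|^2$, and by induction $\|x_n\| = 2^{-n/2}\|x_0\|$. This simultaneously gives $x_n\to 0$ and, because $x_0\neq 0$, the nonvanishing $x_n\neq 0$ for all $\nnn$.

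Next, $x_{n+1} \neq x_n$ follows immediately: if they were equal, then $\|x_{n+1}\| = \|x_n\|$, contradicting $\|x_{n+1}\| = 2^{-1/2}\|x_n\| < \|x_n\|$ (valid since $x_n\neq 0$). Finally, for \cref{e:orthogonal}, I would invoke \cref{l:skew}\cref{i:resol1} applied at $x=x_n$, giving $J_A x_n \perp (x_n - J_A x_n)$, i.e., $\scal{x_{n+1}}{x_n-x_{n+1}}=0$. Dividing through by the product $\|x_{n+1}\|\,\|x_n-x_{n+1}\|$, which is strictly positive by the preceding two observations, produces the stated equality.

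I do not anticipate a real obstacle here: the skew-adjoint/isometric hypothesis is exactly calibrated so that orthogonality and the contraction factor $1/\sqrt{2}$ both drop out cleanly. The only thing that needs minor care is the bookkeeping to ensure the denominators are nonzero before normalizing, which is handled by the explicit formula $\|x_n\| = 2^{-n/2}\|x_0\|$.
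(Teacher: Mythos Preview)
Your argument is correct and complete. It differs from the paper's proof in a genuinely useful way: the paper establishes the nondegeneracy conditions ($x_n\neq 0$ and $x_{n+1}\neq x_n$) by characterizing the zeros and fixed points of $J_A$ via injectivity of $\Id+A$ and of $A$, and then invokes the Bruck--Reich result \cite[Corollary~1.2]{bruck77} to obtain $x_n\to 0$. You instead compute the norms directly and obtain the explicit contraction $\|x_{n+1}\|^2 = \tfrac{1}{2}\|x_n\|^2$, which simultaneously yields $x_n\to 0$ with the quantitative rate $\|x_n\|=2^{-n/2}\|x_0\|$, the nonvanishing $x_n\neq 0$, and the inequality $x_{n+1}\neq x_n$ via strict norm decrease. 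Your route is more elementary and self-contained (no external convergence theorem needed) and delivers the convergence rate as a bonus; the paper's approach, by contrast, separates concerns and leans on a general result that would still apply in settings where such an explicit norm identity is unavailable.
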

\begin{proof} 
Clearly, $A$ is a maximally monotone isometry.
The formula for $J_Ax_n$ is a consequence of \cref{l:skew}\cref{i:resol1} 
which also yields \cref{e:orthogonal} after we prove that the 
quotients are well defined which we do next. 
Let $x\in X$.
Then 
$(\Id+A)^{-1}x = J_Ax=0$
$\Leftrightarrow$
$x=(\Id+A)(0) = 0$ and 
$(\Id+A)^{-1}x = J_Ax=x$
$\Leftrightarrow$
$x=(\Id+A)x$
$\Leftrightarrow$
$Ax=0$
$\Leftrightarrow$
$x=0$. We have shown that 
if $x\neq 0$, then 
$J_Ax\neq 0$ and $J_Ax\neq x$. 
A straightforward induction yields 
$(\forall\nnn)$ $x_n\neq 0$ and $x_{n+1}\neq x_n$, as claimed. 
Finally, \cite[Corollary 1.2]{bruck77} implies that $x_n \to 0$. 
\end{proof}

\begin{remark}
When $X=\RR^2$ and
$$A=\begin{pmatrix}
0 & 1\\
-1 & 0
\end{pmatrix},
$$
then \cref{t:perp} recovers \cite[the example on page~11]{Rocky2021}.
Note that (see \cite[page~206]{kreyszig}) a linear isometry need not be
surjective. 
\end{remark}

\section{Directional asymptotics of Fej\'er monotone sequences}
\label{sec:main}

We are now ready for our main results on the directionally asymptotic behaviour of 
Fej\'er monotone sequences. 
 The proofs significantly extend the reach of those brought to light by 
 Rockafellar in \cite[Theorem~2.3]{Rocky2021}.

\begin{theorem}\label{t:main}
Let $(x_{n})_\nnn$ be a sequence in $X$ that is Fej\'er monotone with respect to 
some nonempty closed convex subset $Z$ of $X$. 
Suppose that $x_n\to \widebar{z}\in X$ and that
$(\forall \nnn)$ $x_{n+1}\neq x_{n}\neq\widebar{z}$.
Then 
\begin{equation}
\label{e:polar1}
\mathcal{C}\bigg(\Big(\frac{x_n-x_{n+1}}{\|x_n-x_{n+1}\|} \Big)_\nnn\bigg)
\cup 
\mathcal{C}\bigg(\Big(\frac{x_n-\widebar{z}}{\|x_n-\widebar{z}\|} \Big)_\nnn\bigg)
\subseteq 
(Z-\widebar{z})^{\ominus};
\end{equation}
in particular, if $\widebar{z}\in Z$, then we may replace $(Z-\widebar{z})^\ominus$ by 
$N_Z(\widebar{z})$ in  \cref{e:polar1}.
\end{theorem}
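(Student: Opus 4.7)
The plan is to reduce both inclusions in \cref{e:polar1} to a single application of \cref{l:polarset:f} applied with $C:=Z-\widebar{z}$, by showing that the support function $\sigma_{Z-\widebar{z}}$ evaluated along each of the two normalized sequences tends to a nonpositive limit superior. The hypotheses $x_{n+1}\neq x_n\neq \widebar{z}$ guarantee that the normalizations are well defined, and the strong convergence $x_n\to\widebar{z}$ will drive the upper bounds to zero.

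For the first sequence, I would feed \cref{l:fejerset}\cref{i:f:consecutive++} into the computation: for every $z\in Z$ and every $n$,
\[
\scal{x_n-x_{n+1}}{z-\widebar{z}} \leq \|x_{n+1}-\widebar{z}\|\,\|x_n-x_{n+1}\| + \tfrac{1}{2}\|x_n-x_{n+1}\|^2.
\]
Dividing through by $\|x_n-x_{n+1}\|$ and taking the supremum over $z\in Z$ yields
\[
\sigma_{Z-\widebar{z}}\!\left(\frac{x_n-x_{n+1}}{\|x_n-x_{n+1}\|}\right) \leq \|x_{n+1}-\widebar{z}\| + \tfrac{1}{2}\|x_n-x_{n+1}\|,
\]
whose right side tends to $0$ by the assumption $x_n\to\widebar{z}$. \cref{l:polarset:f} then delivers the first half of \cref{e:polar1}.

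For the second sequence, I would start from \cref{e:limit}: for all $m\geq n+1$ and $z\in Z$,
\[
\scal{x_n-x_m}{z-\widebar{z}} \leq \tfrac{1}{2}\bigl(\|x_n-\widebar{z}\|^2 - \|x_m-\widebar{z}\|^2\bigr).
\]
Since $x_m\to\widebar{z}$ strongly, passing $m\to\infty$ (both sides are continuous in $x_m$) gives
\[
\scal{x_n-\widebar{z}}{z-\widebar{z}} \leq \tfrac{1}{2}\|x_n-\widebar{z}\|^2.
\]
Dividing by $\|x_n-\widebar{z}\|>0$ and taking $\sup_{z\in Z}$ yields
\[
\sigma_{Z-\widebar{z}}\!\left(\frac{x_n-\widebar{z}}{\|x_n-\widebar{z}\|}\right) \leq \tfrac{1}{2}\|x_n-\widebar{z}\| \longrightarrow 0,
\]
so \cref{l:polarset:f} again places all weak cluster points of this normalized sequence in $(Z-\widebar{z})^\ominus$.

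Finally, the ``in particular'' statement is immediate from the notational convention recorded in the introduction: when $\widebar{z}\in Z$, one has $N_Z(\widebar{z})=(Z-\widebar{z})^\ominus$ by definition. There is no real obstacle here beyond bookkeeping; the only step requiring a touch of care is the passage $m\to\infty$ in \cref{e:limit}, which is justified because strong convergence of $x_m$ makes the inner product continuous in the second argument on the left and the norm continuous on the right.
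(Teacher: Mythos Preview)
Your proposal is correct and follows essentially the same route as the paper: both halves of \cref{e:polar1} are obtained by bounding $\sigma_{Z-\widebar{z}}$ along the normalized sequences via \cref{l:fejerset} and then invoking \cref{l:polarset:f}. The only cosmetic difference is that for the second sequence the paper takes the supremum over $z$ first and then lets $m\to\infty$ using lower semicontinuity of $\sigma_{Z-\widebar{z}}$, whereas you pass to the limit $m\to\infty$ pointwise in $z$ (using strong convergence) before taking the supremum; your order is equally valid and arguably a touch simpler.
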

\begin{proof}
Let $\nnn$. 
Taking the supremum over $z\in Z$ in \cref{e:210625a} yields
\begin{align}
\label{e:210625b}
\sigma_{Z-\widebar{z}}(x_{n}-x_{n+1})
& \leq \|x_{n+1}-\widebar{z}\|\|x_{n}-x_{n+1}\|+\tfrac{1}{2}\|x_{n}-x_{n+1}\|^2.
\end{align}
Dividing \cref{e:210625b} by $\|x_{n}-x_{n+1}\|$ and using the positive
homogeneity of $\sigma_{Z-\widebar{z}}$, we have
\begin{equation}
\label{e:210625c}
\sigma_{Z-\widebar{z}}\Big(\frac{x_n-x_{n+1}}{\|x_n-x_{n+1}\|} \Big)\leq \|x_{n+1}-\widebar{z}\|
+\tfrac{1}{2}\|x_{n}-x_{n+1}\|.
\end{equation}
Because $x_n\to\widebar{z}$ and so $x_n-x_{n+1}\to 0$, we let 
$n\to\infty$ in \cref{e:210625c} to learn that 
\begin{equation}
\label{e:210625d}
\varlimsup_{n\to\infty}
\sigma_{Z-\widebar{z}}\Big(\frac{x_n-x_{n+1}}{\|x_n-x_{n+1}\|} \Big)\leq 0.
\end{equation}
Combining \cref{e:210625d} with \cref{l:polarset:f},
we obtain 
\begin{equation}
\label{e:210625e}
\mathcal{C}\bigg(\Big(\frac{x_n-x_{n+1}}{\|x_n-x_{n+1}\|} \Big)_\nnn\bigg)
\subseteq 
(Z-\widebar{z})^{\ominus}.
\end{equation}

Next, let $m\geq n+1$. 
Taking the supermum over $z\in Z$ in \cref{e:limit} yields
\begin{align}
\label{e:210625f}
\sigma_{Z-\widebar{z}}(x_{n}-x_{m}) &\leq
\tfrac{1}{2}\big(\|x_{n}-\widebar{z}\|^2-\|x_{m}-\widebar{z}\|^2\big).
\end{align}
Passing to the limit as $m\to\infty$ and using the lower semicontinuity of
$\sigma_{Z-\widebar{z}}$ in \cref{e:210625f}, we obtain
\begin{equation}
\label{e:210625g}
\sigma_{Z-\widebar{z}}(x_{n}-\widebar{z})\leq 
\varliminf_{m\rightarrow\infty} \sigma_{Z-\widebar{z}}(x_{n}-x_{m})
\leq \tfrac{1}{2}\|x_{n}-\widebar{z}\|^2.
\end{equation}
Dividing \cref{e:210625g} by 
$\|x_{n}-\widebar{z}\|$ and
using the positive homogeneity of $\sigma_{Z-\widebar{z}}$, we have 
\begin{equation}
\label{e:210625h}
\sigma_{Z-\widebar{z}}\Big(\frac{x_n-\widebar{z}}{\|x_n-\widebar{z}\|} \Big)
\leq \tfrac{1}{2}\|x_{n}-\widebar{z}\|.
\end{equation}
Because $x_n\to\widebar{z}$, we let $n\to\infty$ in \cref{e:210625h} and get 
\begin{equation}
\label{e:210625i}
\varlimsup_{n\rightarrow\infty}\sigma_{Z-\widebar{z}}\Big(\frac{x_n-\widebar{z}}{\|x_n-\widebar{z}\|} \Big)\leq 0. 
\end{equation}
Combining \cref{e:210625i} with our trusted \cref{l:polarset:f}, we obtain 
\begin{equation}
  \label{e:210625j}
  \mathcal{C}\bigg(\Big(\frac{x_n-\widebar{z}}{\|x_n-\widebar{z}\|} \Big)_\nnn\bigg)
  \subseteq 
  (Z-\widebar{z})^{\ominus}.
  \end{equation}
Altogether, \cref{e:210625e} and \cref{e:210625j} imply \cref{e:polar1}. 
\end{proof}

Although ostensibly more general, the following result is actually an easy consequence of 
\cref{t:main}: 

\begin{corollary}\label{t:notconsecutive}
Let $(x_{n})_\nnn$ be a sequence in $X$ that is Fej\'er monotone with respect to 
some nonempty closed convex subset $Z$ of $X$. 
Suppose that $(x_{k_n})_\nnn$ is a subsequence of $(x_n)_\nnn$ such that 
$x_{k_n}\to \widebar{z}\in X$ and that
$(\forall \nnn)$ $x_{k_{n+1}}\neq x_{k_n}\neq\widebar{z}$.
Then 
\begin{equation}
\label{e:210625k}
\mathcal{C}\bigg(\Big(\frac{x_{k_n}-x_{k_{n+1}}}{\|x_{k_n}-x_{k_{n+1}}\|} \Big)_\nnn\bigg)
\cup 
\mathcal{C}\bigg(\Big(\frac{x_{k_n}-\widebar{z}}{\|x_{k_n}-\widebar{z}\|} \Big)_\nnn\bigg)
\subseteq 
(Z-\widebar{z})^{\ominus};
\end{equation}
in particular, if $\widebar{z}\in Z$, then we may replace $(Z-\widebar{z})^\ominus$ by 
$N_Z(\widebar{z})$ in \cref{e:210625k}.
\end{corollary}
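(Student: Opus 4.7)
The plan is to reduce this corollary directly to \cref{t:main} by passing to the subsequence and treating it as a sequence in its own right. The key enabler is \cref{l:fejer}\cref{i:subfejer}, which tells us that Fej\'er monotonicity is inherited by every subsequence: since $(x_n)_\nnn$ is Fej\'er monotone with respect to $Z$, so is $(x_{k_n})_\nnn$.

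Concretely, I would set $y_n := x_{k_n}$ for every $\nnn$ and verify that the hypotheses of \cref{t:main} are satisfied for $(y_n)_\nnn$: it is Fej\'er monotone with respect to $Z$ (by the above), it converges strongly to $\widebar{z}$ (by assumption), and it satisfies $y_{n+1} \neq y_n \neq \widebar{z}$ for all $\nnn$ (again by assumption, since $x_{k_{n+1}}\neq x_{k_n}\neq \widebar{z}$). The conclusion of \cref{t:main} applied to $(y_n)_\nnn$ then reads
\begin{equation*}
\mathcal{C}\bigg(\Big(\frac{y_n-y_{n+1}}{\|y_n-y_{n+1}\|} \Big)_\nnn\bigg)
\cup
\mathcal{C}\bigg(\Big(\frac{y_n-\widebar{z}}{\|y_n-\widebar{z}\|} \Big)_\nnn\bigg)
\subseteq
(Z-\widebar{z})^{\ominus},
\end{equation*}
which, after substituting $y_n = x_{k_n}$, is exactly \cref{e:210625k}.

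For the final clause, if $\widebar{z}\in Z$, then by definition of the normal cone $N_Z(\widebar{z})=(Z-\widebar{z})^\ominus$, so the replacement is immediate, exactly as in \cref{t:main}. There is no serious obstacle here; the whole content of the corollary lies in the fact that \cref{l:fejer}\cref{i:subfejer} lets us treat a convergent subsequence of a Fej\'er monotone sequence as a fresh Fej\'er monotone sequence, so that the directional asymptotics of \cref{t:main} transfer without any extra work.
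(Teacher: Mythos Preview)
Your proposal is correct and matches the paper's proof exactly: the paper simply recalls \cref{l:fejer}\cref{i:subfejer} and applies \cref{t:main} to the subsequence $(x_{k_n})_\nnn$, which is precisely what you do (with the harmless relabeling $y_n := x_{k_n}$).
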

\begin{proof}
Recalling \cref{l:fejer}\cref{i:subfejer}, 
we simply apply \cref{t:main} to $(x_{k_n})_\nnn$. 
\end{proof}

When $X$ is finite-dimensional, we have the following two nice results:

\begin{corollary}\label{t:finite:dim}
Suppose that $X$ is finite-dimensional.
Let $(x_{n})_\nnn$ be a sequence in $X$ that is Fej\'er monotone with respect to 
some nonempty closed convex subset $Z$ of $X$. 
Suppose that $(x_{k_n})_\nnn$ is a subsequence of $(x_n)_\nnn$ such that 
$x_{k_n}\to \widebar{z}\in Z$ and that
$(\forall \nnn)$ $x_{k_{n+1}}\neq x_{k_n}\neq\widebar{z}$.
Then 
\begin{equation*}
\mathcal{C}\bigg(\Big(\frac{x_{k_n}-x_{k_{n+1}}}{\|x_{k_n}-x_{k_{n+1}}\|} \Big)_\nnn\bigg)
\cup 
\mathcal{C}\bigg(\Big(\frac{x_{k_n}-\widebar{z}}{\|x_{k_n}-\widebar{z}\|} \Big)_\nnn\bigg)
\subseteq 
\sss \cap N_Z(\widebar{z});
\end{equation*}
equivalently, 
\begin{equation*}
\lim_{n\rightarrow\infty}d_{\sss\cap N_{Z}(\widebar{z})}
\Big(\frac{x_{k_n}-x_{k_{n+1}}}{\|x_{k_n}-x_{k_{n+1}}\|} \Big)=0
\quad \text{ and }\quad
\lim_{n\rightarrow\infty}d_{\sss\cap N_{Z}(\widebar{z})}
\Big(\frac{x_{k_n}-\widebar{z}}{\|x_{k_n}-\widebar{z}\|}\Big)=0.
\end{equation*}
\end{corollary}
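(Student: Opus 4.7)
The plan is to derive this result by combining \cref{t:notconsecutive} (the inclusion part) with \cref{l:distance} (the distance-to-set reformulation), and exploiting the crucial finite-dimensional fact that the unit sphere $\sss$ is closed (equivalently, that weak and strong convergence agree, so limits of unit vectors are unit vectors).

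First, I would invoke \cref{t:notconsecutive} directly. Since $\widebar{z}\in Z$, its ``in particular'' clause lets us replace $(Z-\widebar{z})^\ominus$ by $N_Z(\widebar{z})$, yielding
$$
\mathcal{C}\bigg(\Big(\tfrac{x_{k_n}-x_{k_{n+1}}}{\|x_{k_n}-x_{k_{n+1}}\|}\Big)_\nnn\bigg)
\cup
\mathcal{C}\bigg(\Big(\tfrac{x_{k_n}-\widebar{z}}{\|x_{k_n}-\widebar{z}\|}\Big)_\nnn\bigg)
\subseteq N_Z(\widebar{z}).
$$
Next, both normalized sequences live on $\sss$, which is closed in the finite-dimensional space $X$ (strong and weak convergence coincide, and taking norms is continuous). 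Hence any cluster point has norm $1$, and the right-hand side above may be sharpened to $\sss\cap N_Z(\widebar{z})$. This establishes the first displayed inclusion in the corollary.

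For the equivalent distance formulation, I would apply \cref{l:distance} with $C:=\sss\cap N_Z(\widebar{z})$ to each of the two normalized sequences in turn. The set $C$ is closed as an intersection of two closed sets, and the normalized sequences are bounded (indeed they lie on $\sss$), so the hypotheses of \cref{l:distance} are met once we verify $C\neq\emptyset$. Nonemptiness is automatic: by Bolzano--Weierstrass each bounded sequence in finite-dimensional $X$ has at least one cluster point, and by the inclusion just proved this cluster point lies in $\sss\cap N_Z(\widebar{z})$. Consequently, \cref{l:distance} converts each set-theoretic inclusion into the corresponding statement $d_{\sss\cap N_Z(\widebar{z})}(\cdot)\to 0$, completing the proof.

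There is no serious obstacle here: the argument is essentially a packaging of \cref{t:notconsecutive} and \cref{l:distance}. The one subtle point worth flagging is the need to check that $\sss\cap N_Z(\widebar{z})$ is nonempty before invoking \cref{l:distance}, and the only ingredient that is genuinely finite-dimensional is the closedness of $\sss$ --- precisely the feature whose failure in infinite dimensions motivates \cref{sec:shift}.
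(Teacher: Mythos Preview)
Your proof is correct and follows exactly the approach in the paper, which simply says ``Combine \cref{t:notconsecutive} with \cref{l:distance}.'' You have unpacked this combination carefully, and your explicit verification that $\sss\cap N_Z(\widebar{z})$ is nonempty (needed for \cref{l:distance}) is a detail the paper leaves implicit.
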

\begin{proof} 
  Combine \cref{t:notconsecutive} with  \cref{l:distance}.
\end{proof}


\begin{corollary}[{\bf no zigzagging}]
\label{c:nozigzag}
Suppose that $X$ is finite-dimensional
and let $(x_n)_\nnn$ be a sequence that is 
Fej\'er monotone with respect to some closed convex subset $Z$ of $X$.
Suppose that $x_n\to \widebar{z}\in Z$, that 
$(\forall\nnn)$ $x_{n+1}\neq x_n\neq\widebar{z}$, 
that $\inte Z\neq\varnothing$, and that
$N_Z(\widebar{z})$ is a ray.
Then
\begin{equation}
\label{e:nozigzag}
\lim_{n\to\infty}\frac{x_n-x_{n+1}}{\|x_n-x_{n+1}\|}
=
\lim_{n\to\infty}\frac{x_n-\widebar{z}}{\|x_n-\widebar{z}\|} 
\in \sss\cap N_Z(\widebar{z}).
\end{equation}
\end{corollary}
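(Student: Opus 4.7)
The plan is a direct reduction to \cref{t:finite:dim}: once we establish that the target set $\sss \cap N_Z(\widebar{z})$ consists of a single point, the conclusion falls out immediately from the distance-form of that corollary.

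First, I apply \cref{t:finite:dim} with the trivial choice $k_n = n$. All hypotheses carry over verbatim, and the distance-form conclusion gives
\begin{equation*}
\lim_{n\to\infty} d_{\sss \cap N_Z(\widebar{z})}\Big(\frac{x_n-x_{n+1}}{\|x_n-x_{n+1}\|}\Big) = 0
\quad \text{and} \quad
\lim_{n\to\infty} d_{\sss \cap N_Z(\widebar{z})}\Big(\frac{x_n-\widebar{z}}{\|x_n-\widebar{z}\|}\Big) = 0.
\end{equation*}

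Next, I identify $\sss \cap N_Z(\widebar{z})$. Since $N_Z(\widebar{z})$ is a ray, there exists $u \in X \smallsetminus \{0\}$ with $N_Z(\widebar{z}) = \{t u : t \geq 0\}$, and after normalization we may assume $u \in \sss$. Consequently $\sss \cap N_Z(\widebar{z}) = \{u\}$ is a singleton. The hypothesis $\inte Z \neq \varnothing$ is not invoked analytically in this step; it enters only to ensure that the geometric assumption on $N_Z(\widebar{z})$ describes the intended boundary-point picture rather than a degenerate one.

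Combining the two steps, for each of the two normalized sequences $(v_n)_\nnn$ we have $\|v_n - u\| = d_{\{u\}}(v_n) \to 0$, which is precisely \cref{e:nozigzag}, and the common limit $u$ lies in $\sss \cap N_Z(\widebar{z})$ by construction. No serious obstacle is anticipated: the corollary is essentially a specialization of \cref{t:finite:dim} made sharp by the one-line geometric observation that a ray meets the unit sphere in exactly one point.
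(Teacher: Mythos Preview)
Your argument is correct and is essentially the same as the paper's: invoke \cref{t:finite:dim} and observe that $\sss\cap N_Z(\widebar{z})$ is a singleton because $N_Z(\widebar{z})$ is a ray. Your remark that the hypothesis $\inte Z\neq\varnothing$ is not used in the argument is accurate; in fact, in finite dimensions, if $\inte Z=\varnothing$ then $N_Z(\widebar{z})$ contains a nonzero linear subspace and hence cannot be a ray, so that hypothesis is redundant.
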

\begin{proof}
Clear from \cref{t:finite:dim} because $\sss \cap N_Z(\bar{z})$ is a singleton
when $N_Z(\widebar{z})$ is a ray. 
\end{proof}

\section{Large sets of directionally asymptotic cluster points}\label{s:large:cluster}

In this section, we give an example illustrating that the sets of directionally asymptotic
cluster points can be large. It also shows that
without the interiority assumption in \cref{c:nozigzag},
\cref{e:nozigzag} can go quite wrong.

We start with a fact from real analysis: 

\begin{fact}[{\bf Dirichlet}]
{\rm (See, e.g., \cite[page~88]{sohrab})}
\label{l:dirich}
Let $\alpha\in\RR\smallsetminus \QQ$. Then the set
$\tmenge{n\alpha-\floor{n\alpha}}{\nnn}$ is dense in $[0,1]$.
\end{fact}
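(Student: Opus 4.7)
The plan is to establish density via Dirichlet's classical pigeonhole argument, in two stages. The first stage produces a positive integer $q$ for which the fractional part $q\alpha-\floor{q\alpha}$ is arbitrarily close to $0$ or to $1$; the second stage iterates this small step to fill out $[0,1]$.

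For the first stage, fix an integer $N\geq 2$ and partition $[0,1)$ into the $N$ subintervals $I_k:=[k/N,(k+1)/N)$ for $k=0,1,\ldots,N-1$. I would then apply the pigeonhole principle to the $N+1$ fractional parts $n\alpha-\floor{n\alpha}$, $n=0,1,\ldots,N$: two of them, say for indices $0\leq i<j\leq N$, must fall into a common $I_k$. Setting $q:=j-i\in\{1,\ldots,N\}$, this yields an integer $p$ with $|q\alpha-p|<1/N$, and the hypothesis $\alpha\in\RR\smallsetminus\QQ$ precludes $q\alpha=p$; hence $\beta:=q\alpha-\floor{q\alpha}$ lies in $(0,1/N)\cup(1-1/N,1)$.

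For the second stage, given $x\in[0,1]$ and $\varepsilon>0$, I would choose $N$ with $1/N<\varepsilon$ and produce $q$ and $\beta$ as above. The iterates $mq\alpha-\floor{mq\alpha}$ for $m=1,2,\ldots$ trace a walk on $[0,1)$ (viewed with endpoints identified) of constant step $\beta$, whose arc length $\min(\beta,1-\beta)$ is below $1/N<\varepsilon$. Consecutive orbit points therefore lie within $\varepsilon$ of each other on the circle, so every $x\in[0,1)$ is within $\varepsilon$ of some iterate; setting $n:=mq\in\NN$ yields $|n\alpha-\floor{n\alpha}-x|<\varepsilon$, as required.

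The only subtle point — the natural place where a quick proof can stumble — is that the walk moves ``forward'' in small increments if $\beta\in(0,1/N)$ but ``backward'' in small increments if $\beta\in(1-1/N,1)$, so one needs to verify that positive multiples $m\in\NN$ (not just $m\in\{\ldots,-1,0,1,\ldots\}$) suffice to approximate $x$. This is where the circular viewpoint helps: regardless of direction, $m$ successive rotations by $\beta$ sweep out an $\varepsilon$-net of $[0,1)$, so positive $m$ alone do the job. Every other step is elementary counting.
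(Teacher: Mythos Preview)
Your argument is the classical Dirichlet pigeonhole proof and is correct; the one delicate point you flag --- that positive multiples $m$ suffice even when $\beta\in(1-1/N,1)$ --- is handled by your circle interpretation, since then the rotation has step $1-\beta<1/N$ in the reverse direction and the forward orbit $\{mq\alpha\}$, $m=0,1,\ldots,\lfloor 1/(1-\beta)\rfloor$, already forms an $\varepsilon$-net of $[0,1)$.

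As for the comparison: the paper does not prove this statement at all. It is recorded as a \emph{Fact} with a citation to \cite[page~88]{sohrab} and is used as a black box in the analysis of the rotation example. So you have supplied a self-contained proof where the paper simply invokes the literature; there is nothing to compare in terms of approach.
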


For the remainder of this section, $R_\alpha$ denotes
the counterclockwise rotator in the Euclidean plane by $\alpha$.

\begin{example}
Suppose that $X=\RR^2$, 
let  $0<\theta\notin \tfrac{1}{2}\pi\NN$, 
Then 
$T := \tfrac{1}{2}\Id+\tfrac{1}{2}R_{2\theta}
= \cos(\theta)R_{\theta}$ is firmly nonexpansive,
with $Z := \Fix T = \{0\}$. 
Let $x_0\in X\smallsetminus\{0\}$, and set 
\begin{equation}\label{e:iterate}
(\nnn)\quad \ x_{n+1}:=T x_{n}.
\end{equation}
Then $x_n\to \widebar{z} := 0$, and 
$(\forall \nnn)$ 
$x_{n+1}\neq x_n\neq \widebar{z}$
and 
$\scal{x_{n}-x_{n+1}}{x_{n+1}}=0$. 
Moreover, we have the following dichotomoy:
\begin{enumerate}
\item $\theta\in  2\pi\QQ$ and
\begin{equation*}
\mathcal{C}\bigg(\Big(\frac{x_n-x_{n+1}}{\|x_n-x_{n+1}\|} \Big)_\nnn\bigg)
\cup 
\mathcal{C}\bigg(\Big(\frac{x_n}{\|x_n\|} \Big)_\nnn\bigg)
\quad \text{is a \emph{finite} subset of $\sss$.}
\end{equation*}
\item $\theta\notin2\pi\QQ$ and 
\begin{equation*}
\mathcal{C}\bigg(\Big(\frac{x_n-x_{n+1}}{\|x_n-x_{n+1}\|} \Big)_\nnn\bigg)
=
\mathcal{C}\bigg(\Big(\frac{x_n}{\|x_n\|} \Big)_\nnn\bigg)
=\sss. 
\end{equation*}
\end{enumerate}
\end{example}

\begin{proof}
Because $\theta\notin 2\pi\mathbb{Z}$, we have $\Fix T=\{0\}$.
Note that
$$T^n=(\cos\theta)^n R_{n\theta}=(\cos\theta)^n
\begin{pmatrix}
\cos n \theta & -\sin n\theta\\
\sin n\theta & \cos n\theta
\end{pmatrix}
\;\;\text{ and }
\;\; x_{n}=T^n x_{0}.$$
Since  $0<|\cos\theta|<1$ and $R_{n\theta}$ is an isometry,
we have $\|T^n\|\rightarrow 0$. Thus 
$x_{n}\rightarrow 0$. 
By \cref{l:skew}\cref{i:resol2}, we have
$\scal{x_{n}-x_{n+1}}{x_{n+1}} =0.$
Moreover, $\|x_{n+1}\|=\|\cos\theta R_{\theta} x_{n}\|=|\cos\theta |\|x_{n}\|<\|x_{n}\|$ because
$x_{0}\not =0$,
so $(\forall \nnn)\ x_{n+1}\neq x_{n}$ and $x_{n}\neq 0$.

To study the set of cluster points of
$(T^{n}x_{0}/\|T^nx_{0}\|)_\nnn$, we consider two cases.

\noindent Case 1: $\cos\theta>0$. We have
\begin{align}
\frac{T^{n}x_{0}}{\|T^n x_{0}\|}& =R_{n\theta}\frac{x_{0}}{\|x_{0}\|}
=\begin{pmatrix}\label{e:finitec}
\cos n \theta & -\sin n\theta\\
\sin n \theta & \cos n\theta
\end{pmatrix}
\frac{x_{0}}{\|x_{0}\|}.
\end{align}
We proceed with two subcases.

Subcase 1: $\frac{\theta}{2\pi}\in \QQ$. Then
$\theta=2\pi \frac{k}{l}$ with $k,l\in\NN$ and $l\neq 0$, and
$\cos n\theta=\cos \frac{n}{l}(2k\pi)$ and
$\sin n\theta=\sin \frac{n}{l}(2k\pi)$. By using $n=ml, ml+1, \ldots, ml+l-1$ with $m\in\NN$, the set
$$\menge{R_{n\theta}}{\nnn}=\menge{R_{t2k\pi/l}}{t=0,\ldots, l-1}.$$
The sequence $(R_{n\theta})_{\nnn}$ has at most $l$ cluster points. 
From \cref{e:finitec} we see that
$(T^{n}x_{0}/\|T^n x_{0}\|)_\nnn$ has at most $l$ cluster points.
In fact, if $k=2$, then there are precisely $l$ cluster points. 

Subcase 2: $\frac{\theta}{2\pi}\not\in \QQ$.
Then $\theta=2\pi \alpha$ with $\alpha\in \RR_{++}\smallsetminus \QQ$, and
$$\cos n\theta=\cos n(2\pi\alpha)=\cos (n\alpha)(2\pi)=\cos(n\alpha-\floor{n\alpha})(2\pi),\text{ and }$$
$$\sin n\theta=\sin n(2\pi\alpha)=\sin (n\alpha)(2\pi)=\sin (n\alpha-\floor{n\alpha})(2\pi),$$
 By \cref{l:dirich}, $\tmenge{n\alpha-\floor{n\alpha}}{\nnn}$ is dense in $[0,1]$. Hence
the set of cluster points of ${\tmenge{R_{n\theta}}{\nnn}}$ is
$\tmenge{R_{\beta}}{\beta\in [0,2\pi]}.$
By \cref{e:finitec}, the set of cluster points of
$(T^{n}x_{0}/\|T^n x_{0}\|)_\nnn$ is $\sss$.

\noindent
Case 2: $\cos\theta<0$. We have
\begin{align}\label{e:quotient}
\frac{T^{n}x_{0}}{\|T^n x_{0}\|}& =(-1)^n R_{n\theta}\frac{x_{0}}{\|x_{0}\|}=(-1)^{n}\begin{pmatrix}
\cos n \theta & -\sin n\theta\\
\sin n \theta & \cos n\theta
\end{pmatrix}
\frac{x_{0}}{\|x_{0}\|}.
\end{align}
We proceed with two subcases.

Subcase 1: $\frac{\theta}{2\pi}\in \QQ$. Due to $(-1)^{n}$, we have to consider
$n$ being even and odd.
When $n$ is even, write $n=2k$ with $k\in\NN$,
$$R_{n\theta}=\begin{pmatrix}
\cos k (2\theta) & -\sin k(2\theta)\\
\sin k (2\theta) & \cos k (2\theta)
\end{pmatrix}.
$$
Since $\frac{2\theta}{2\pi}\in \QQ$, similar arguments as in Case 1 subcase 1
show that
$(R_{k(2\theta)})_{k\in\NN}$ has a finite number of cluster points.
When $n$ is odd, let $n=2k+1$ and $\theta=m2\pi/l$ with $k, l, m\in\NN$ and $l\neq 0$. 
If we set
$k=tl+s$ for $t, s\in\NN$ and $0\leq s\leq l-1$, then
$$(2k+1)\frac{m2\pi}{l}=\frac{2(tl+s)+1}{l}m2\pi=\bigg(2t+\frac{2s+1}{l}\bigg)m2\pi$$
so that
$$\cos(2k+1)\theta=\cos\bigg(\frac{2s+1}{l}m2\pi\bigg), \quad \sin(2k+1)\theta=
\sin\bigg(\frac{2s+1}{l}m2\pi\bigg)$$
where $0\leq s\leq l-1$. This shows that when $n$ is odd, we have most $l$ cluster points.
Combining the even and odd cases, the set
$\menge{R_{n\theta}}{\nnn}$ has at most a finite number of cluster points, so is
$(T^n x_{0}/\|T^n x_{0}\|)_\nnn$ by \cref{e:quotient}.

Subcase 2: $\frac{\theta}{2\pi}\not\in \QQ$. Put $\theta=\alpha (2\pi)$ with 
$\alpha\not\in \QQ$.
When $n$ is even, write $n=2k$ with $k\in\NN$.
Then
$R_{n\theta}=R_{k(2\theta)}$, similar arguments as in Case 1 subcase 2 show that
the set of cluster points
of $\menge{R_{k(2\theta)}}{k\in\NN}$ is $\menge{R_{\beta}}{0\leq \beta\leq 2\pi}$,
because $\frac{2\theta}{2\pi}\in\RR_{++}\smallsetminus \QQ$.

When $n$ is odd, write $n=2k+1$ with $k\in \NN$. Since that
$$\cos[(2k+1)\alpha2\pi] =\cos[k(2\alpha)2\pi+2\alpha\pi]=
\cos[(k(2\alpha)-\floor{k(2\alpha)})2\pi+2\alpha\pi],\text{ and} $$
$$\sin[(2k+1)\alpha2\pi] =\sin[k(2\alpha)2\pi+2\alpha\pi]=
\sin[(k(2\alpha)-\floor{k(2\alpha)})2\pi+2\alpha\pi],$$
and that
$$\menge{k(2\alpha)-\floor{k(2\alpha)}}{k\in\NN} \text{ is dense in $[0,1]$}$$
we see that the set of cluster points of
$\tmenge{R_{(2k+1)\theta}}{k\in\NN}$ is
$\tmenge{R_{\beta}}{2\alpha\pi\leq\beta\leq 2\alpha\pi+2\pi}.$
Hence, in both cases the set of cluster points of $(T^{n}x_{0}/\|T^{n}x_{0}\|)_\nnn$
is $\sss$.

Finally, we consider the set of cluster points of
$$\Big(\frac{x_n-x_{n+1}}{\|x_n-x_{n+1}\|} \Big)_\nnn.$$
Now
\begin{align*}
\Id-T &=\frac{\Id-R_{2\theta}}{2}=\sin\theta \begin{pmatrix}
\sin \theta & \cos \theta\\
-\cos \theta & \sin \theta
\end{pmatrix}\\
& =\sin\theta \begin{pmatrix}
\cos(\theta+3\pi/2) & -\sin (\theta+3\pi/2)\\
\sin (\theta+3\pi/2) & \cos (\theta+3\pi/2)
\end{pmatrix}
=\sin\theta R_{(\theta+3\pi/2)},
\end{align*}
so that
\begin{align*}
\frac{x_{n}-x_{n+1}}{\|x_{n}-x_{n+1}\|} &=\frac{(\Id-T)x_{n}}{\|(\Id-T)x_{n}\|}\\
&=\begin{cases}
R_{(\theta+3\pi/2)} \frac{x_{n}}{\|x_{n}\|},& \text{ if $\sin\theta>0$;}\\
-R_{(\theta+3\pi/2)}\frac{x_{n}}{\|x_{n}\|}, & \text{ if $\sin\theta <0$.}
\end{cases}
\end{align*}
Then the set of cluster points of
$$\bigg(\frac{x_{n}-x_{n+1}}{\|x_{n}-x_{n+1}\|}\bigg)_{\nnn}$$ is just $\pm R_{(\theta+3\pi/2)}$ rotations
of the set of cluster points of
$(x_{n}/\|x_{n}\|)_{\nnn}$.
Consequently,
the set of cluster points of
$((x_{n}-x_{n+1})/\|x_{n}-x_{n+1}\|)_{\nnn}$
is a finite set if
$\frac{\theta}{2\pi}\in\QQ$; and
is $\sss$ if
$\frac{\theta}{2\pi}\not\in\QQ$.
\end{proof}

\begin{remark}
We do not consider the case when $\theta\in\tfrac{1}{2}\pi\NN$ 
because then $T=\Id$ or $0$  in which case one has 
finite convergence of $(x_n)_\nnn$. 
\end{remark}

\section{Missing the sphere: an infinite-dimensional example}

\label{sec:shift}
It is interesting to ask whether in infinite-dimensional Hilbert spaces
the nonempty sets of weak cluster points in \cref{t:main} lie in
the sphere $\sss$. 
It turns out that the answer is negative, and the sequence provided
is obtained by iterating a resolvent. To this end, 
we assume in this section that 
\begin{equation*}
	X = \ell^2\big(\{1,2,\ldots\}\big),
\end{equation*}
with the standard Schauder basis
$e_1:=(1,0,0,\ldots), e_2:=(0,1,0,0,\ldots)$, and so on.
We define the \emph{right-shift operator} by 
\begin{equation*}
R \colon X\to X\colon (\xi_1,\xi_2,\ldots)
\mapsto (0,\xi_1,\xi_2,\ldots),
\end{equation*}
Then $R$ is a linear isometry with $\Fix R = \{0\}$. 
We shall also require the following classical identity

\begin{fact}[{\bf Vandermonde's identity}] {\rm (See \cite[Section~5.1]{GKP}.)}
\label{l:vander}
Let $m, n, r$ be in $\NN$. Then
$$\binom{m+n}{r}=\sum_{k=0}^{r}\binom{m}{r}\binom{n}{r-k}.$$
\end{fact}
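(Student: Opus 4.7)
The plan is to prove Vandermonde's identity by a generating-function argument that compares coefficients in the polynomial identity $(1+x)^{m+n}=(1+x)^m(1+x)^n$. First I would expand the left-hand side using the binomial theorem, obtaining
$$(1+x)^{m+n}=\sum_{r=0}^{m+n}\binom{m+n}{r}x^r.$$
Then I would expand each factor on the right-hand side separately,
$$(1+x)^m(1+x)^n=\Big(\sum_{j=0}^{m}\binom{m}{j}x^j\Big)\Big(\sum_{i=0}^{n}\binom{n}{i}x^i\Big),$$
and collect terms of total degree $r$. Under the standard convention $\binom{N}{j}=0$ whenever $j<0$ or $j>N$, the coefficient of $x^r$ on the right becomes $\sum_{k=0}^{r}\binom{m}{k}\binom{n}{r-k}$. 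Matching coefficients of $x^r$ on both sides yields the identity (and it is vacuously $0=0$ when $r>m+n$).

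An alternative route is a direct double-counting argument. Partition a set $S$ of size $m+n$ into disjoint subsets $A$ and $B$ with $|A|=m$ and $|B|=n$. The total number of $r$-element subsets of $S$ is $\binom{m+n}{r}$. On the other hand, each such subset decomposes uniquely as a $k$-subset of $A$ together with an $(r-k)$-subset of $B$ for some $k\in\{0,1,\ldots,r\}$, and there are $\binom{m}{k}\binom{n}{r-k}$ such pairs. Summing over $k$ from $0$ to $r$ reproduces the right-hand side and matches the left-hand side.

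There is no real obstacle: both approaches are short and complete. The only bookkeeping issue is the convention $\binom{N}{j}=0$ for $j$ outside $\{0,\ldots,N\}$, which keeps the upper limit of the sum uniformly equal to $r$ rather than $\min\{r,m\}$. For a paper of analytic flavor I would prefer the generating-function proof, since it reduces the statement to a one-line algebraic identity followed by coefficient extraction.
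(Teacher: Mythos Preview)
Your proposal is correct: both the generating-function argument and the double-counting argument are complete, standard proofs of Vandermonde's identity (and you implicitly fix the obvious typo in the displayed statement, where $\binom{m}{r}$ should read $\binom{m}{k}$).

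There is nothing to compare against, however. The paper does not supply a proof of this Fact at all; it simply cites \cite[Section~5.1]{GKP} and moves on, using the identity later as a computational tool. So your write-up goes beyond what the paper itself provides. Either of your arguments would serve perfectly well if a self-contained proof were desired; the generating-function version is the one given in the cited reference.
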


\begin{example} Define the firmly nonexpansive operator $T\colon X \to X$ by
\begin{equation*}
T := \tfrac{1}{2}\Id+\tfrac{1}{2}R, 
\end{equation*}
set $x_0 := e_1$, and 
$(x_n)_\nnn := (T^nx_0)_\nnn$ with $x_0 = e_1.$
Then the following hold:
\begin{enumerate}
\item\label{i:iterate} $(x_{n})_\nnn$ is Fej\'er monotone with respect to $\Fix T=\{0\}$, and
$x_n\to 0$. Moreover, $(\forall \nnn)$ $x_{n+1}\neq x_{n}\neq 0$. 
\item\label{i:perp:inf}
$(\forall \nnn)\ \scal{x_{n+1}}{x_{n}-x_{n+1}}=0.$
\item \label{i:weakcon} Both
$$\Big(\frac{x_n}{\|x_n\|}\Big)_{\nnn} \;\;\text{ and }\;\; \Big(\frac{x_n-x_{n+1}}{\|x_n-x_{n+1}\|}\Big)_\nnn$$
converge weakly --- but not strongly --- to $0\not\in \sss $.
\end{enumerate}
\end{example}
\begin{proof}
\cref{i:iterate}: It is well known that $(x_{n})_\nnn$ is Fej\'er monotone with respect to $\Fix T=\{0\}$, because $T$ is nonexpansive. Since $R^ke_{1}=e_{k+1}$, we have
\begin{equation}\label{e:xn}
x_n = T^n x_{0}=\frac{1}{2^n}(\Id+R)^{n}x_{0}=\frac{1}{2^n}\sum_{k=0}^{n}\binom{n}{k}R^k x_{0}=\frac{1}{2^n}
\sum_{k=0}^n \binom{n}{k} e_{k+1}.
\end{equation}
Hence, by \cref{l:vander},
\begin{equation*}
\|x_n\|^2 = \frac{1}{4^n}\sum_{k=0}^n \binom{n}{k}^2
= \frac{1}{4^n}\binom{2n}{n} =
\frac{1}{4^n}\frac{(2n)!}{(n!)^2};
\end{equation*}
in particular, $x_{n}\neq 0$. $x_{n+1}\neq x_{n}$ because $x_{n+1}$ contains a nonzero term of
$e_{n+2}$ and $e_{n+2}\perp e_{k+1}$ for $1\leq k\leq n$.
 
Now recall \emph{Stirling's formula} (see, e.g., \cite[Theorem~5.44]{Stromberg})
which states that
\begin{equation}\label{e:stirling}
	n! \approx \sqrt{2\pi n}\frac{n^n}{e^n}
\end{equation}
for large $n$,
and which implies
\begin{equation*}
\|x_n\|^2 = \frac{1}{4^n}\binom{2n}{n} = \frac{(2n)!}{4^n(n!)^2} \approx \frac{1}{4^n}
\frac{\sqrt{2\pi(2n)}(2n/e)^{2n}}{(\sqrt{2\pi n})^2(n/e)^{2n}}
= \frac{1}{\sqrt{\pi}}\frac{1}{\sqrt{n}}\to 0.
\end{equation*}
(The qualitative fact that $x_n\to 0$ also follows from \cite[Example~5.29]{BC2017} or
\cite[Corollary 1.2]{bruck77}.)

\cref{i:perp:inf}: Since $R$ is an isometry, this follows from 
\cref{l:skew}\cref{i:resol2}.

\cref{i:weakcon}. For \emph{fixed} $k$, we have from Stirling's formula \cref{e:stirling} that
\begin{equation}\label{e:nchoose}
\binom{n}{k} = \frac{n!}{k!(n-k)!}
\approx
\frac{1}{k!}\frac{\sqrt{2\pi n}(n/e)^{n}}{\sqrt{2\pi (n-k)}((n-k)/e)^{n-k}}
\approx \frac{n^k}{k!}
\end{equation}
for large $n$.
Hence
\begin{align*}
\frac{x_n}{\|x_n\|}
&\approx
\sqrt[4]{\pi}\sqrt[4]{n}
\frac{1}{2^n}\sum_{k=0}^n \binom{n}{k} e_{k+1}
\approx \sum_{k=0}^n
\frac{\sqrt[4]{\pi}\sqrt[4]{n}}{2^n} \frac{n^k}{k!} e_{k+1} \rightharpoonup 0
\end{align*}
because that $(e_{k})_{\kkk}$ is a total set in $\ell^2(\NN)$ and that
for each fixed $\kkk$ the coefficient of $e_{k+1}$ in $x_{n}/\|x_{n}\|$ clearly converges to $0$ as
$n\to\infty$; see, e.g., \cite[Example 4.8-6]{kreyszig}.

Next,
\begin{align}\label{e:diff}
x_{n}-x_{n+1}
&=
\frac{1}{2^n}\sum_{k=0}^n \binom{n}{k} e_{k+1}
- \frac{1}{2^{n+1}}\sum_{k=0}^{n+1} \binom{n+1}{k} e_{k+1}.
\end{align}
Since $e_{n+2}\perp e_{k+1}$ for $0\leq k\leq n$, by \cref{l:vander} and \cref{e:xn}
we have
\begin{subequations}
\label{e:simp}
\begin{align}
\scal{x_{n}}{x_{n+1}} &=\scal{\frac{1}{2^n}\sum_{k=0}^n \binom{n}{k} e_{k+1}}{\frac{1}{2^{n+1}}\sum_{k=0}^{n+1} \binom{n+1}{k} e_{k+1}}\\
&=\frac{1}{2}\frac{1}{4^n}\scal{\sum_{k=0}^n \binom{n}{k} e_{k+1}}{\sum_{k=0}^{n} \binom{n+1}{k} e_{k+1}}\\
&=\frac{1}{2}\frac{1}{4^n}\sum_{k=0}^n \binom{n}{k}\binom{n+1}{k}=\frac{1}{2}\frac{1}{4^n}\sum_{k=0}^n \binom{n}{n-k}\binom{n+1}{k}=\frac{1}{2}\frac{1}{4^n}\binom{2n+1}{n}.
\end{align}
\end{subequations}

It follows from \cref{e:xn} and \cref{e:simp} that
\begin{subequations}
\label{e:diff:norm}
\begin{align}
\|x_n-x_{n+1}\|^2 &=\|x_{n}\|^2+\|x_{n+1}\|^2-2\scal{x_{n}}{x_{n+1}}\\
& =\frac{1}{4^{n}}\binom{2n}{n}
+\frac{1}{4^{n+1}}\binom{2(n+1)}{n+1}-2\frac{1}{2}\frac{1}{4^n}\binom{2n+1}{n}
\\
&=\frac{1}{4^n}\bigg[\binom{2n}{n}+\frac{1}{4}\binom{2n+2}{n+1}-\binom{2n+1}{n}\bigg]\\
& =\frac{1}{2(n+1)4^n}\binom{2n}{n}
= \frac{1}{2(n+1)}\|x_n\|^2\\
&\approx
\frac{1}{2n}\frac{1}{\sqrt{\pi n}}
= \frac{1}{2\sqrt{\pi}}\frac{1}{n^{3/2}}
\end{align}
\end{subequations}
for large $n$.
Combining \cref{e:diff}, \cref{e:diff:norm}, and \cref{e:nchoose}, we obtain
\begin{subequations}
\begin{align}
\frac{x_n-x_{n+1}}{\|x_n-x_{n+1}\|}
&\approx
\frac{\sqrt{2}\sqrt[4]{\pi}n^{3/4}}{2^n}\sum_{k=0}^n \binom{n}{k} e_{k+1}
- \frac{\sqrt{2}\sqrt[4]{\pi}n^{3/4}}{2^{n+1}}\sum_{k=0}^{n+1} \binom{n+1}{k} e_{k+1}\\
&\approx
\sum_{k=0}^n \frac{\sqrt{2}\sqrt[4]{\pi}n^{3/4}}{2^n}\binom{n}{k} e_{k+1}
- \sum_{k=0}^{n+1}\frac{\sqrt{2}\sqrt[4]{\pi}n^{3/4}}{2^{n+1}} \binom{n+1}{k} e_{k+1}\\
&\approx
\sum_{k=0}^n \frac{\sqrt{2}\sqrt[4]{\pi}n^{3/4}}{2^n}\frac{n^k}{k!} e_{k+1}
- \sum_{k=0}^{n+1}\frac{\sqrt{2}\sqrt[4]{\pi}n^{3/4}}{2^{n+1}}\frac{(n+1)^k}{k!} e_{k+1}
\\
&\rightharpoonup 0,
\end{align}
\end{subequations}
because for every fixed $\kkk$ the coefficients of $e_{k+1}$ in
$(x_{n}-x_{n+1})/\|x_{n}-x_{n+1}\|$ converge to $0$
as $n\to\infty$.

In summary, both quotient limits converge weakly but not strongly to $0$.
\end{proof}

\section*{Acknowledgements}
We thank Terry Rockafellar for his inspirational talk at the virtual
West Coast Optimization Meeting in May 2021 which stimulated this research. 
We also thank Walaa Moursi for suggesting to investigate the operator $T$ in \cref{sec:shift}.
HHB and XW were partially supported by NSERC Discovery Grants.		
MKL was partially supported by HHB and XW's NSERC Discovery Grants.


\addcontentsline{toc}{section}{References}

\bibliographystyle{abbrv}

\end{document}